\numberwithin{equation}{section}
\newtheorem{theorem}{Theorem}[section]
\theoremstyle{definition}
\newtheorem{remark}[theorem]{Remark}
\definecolor{light-gray}{gray}{0.69}
\definecolor{light-red}{rgb}{1.0,0.4,0.4}
\definecolor{light-blue}{rgb}{0.4,0.45,1}
\definecolor{light-green}{rgb}{0.5,0.8,0.0}
\definecolor{dark-green}{rgb}{0.0,0.4,0.0}
\definecolor{dark-red}{rgb}{1.0,0.3,0.3}
\definecolor{dark-gray}{gray}{0.59}
\definecolor{very-dark-gray}{gray}{0.39}
\definecolor{lighter-red}{rgb}{1.0,0.6,0.6}
\definecolor{ocker_hell}{rgb}{0.75,0.7,0.4}
\definecolor{gelb_dunkel}{rgb}{0.75,0.7,0.0}
\definecolor{gruen_hell}{rgb}{0.5,0.8,0.0}
\definecolor{dark-blue}{rgb}{0.0,0.0,0.5}
\definecolor{new-blue}{rgb}{0.0,0.0,0.8}
\definecolor{lila}{rgb}{0.5,0.0,0.5}
\definecolor{dark-red}{rgb}{0.5,0.0,0.0}
\begin{document}

\renewcommand{\thefootnote}{\fnsymbol{footnote}}\setcounter{footnote}{0}

\begin{center}
{\Large Locally Conservative Continuous Galerkin FEM for Pressure Equation in Two-Phase Flow Model in Subsurfaces\footnote{This work was partially supported by a grant from the School of Energy Resources, University of Wyoming.} }
\end{center}

\renewcommand{\thefootnote}{\fnsymbol{footnote}}
\renewcommand{\thefootnote}{\arabic{footnote}}

\begin{center}
Q. Deng  and V. Ginting \\
Department of Mathematics, University of Wyoming, Laramie, Wyoming 82071, USA
\end{center}

\renewcommand{\baselinestretch}{1.5}
\begin{abstract}
A typical two-phase model for subsurface flow couples the Darcy equation for pressure and a transport equation for saturation in a nonlinear manner. In this paper, we study a combined method consisting of continuous Galerkin finite element methods (CGFEMs) followed by a post-processing technique for Darcy equation and finite volume method (FVM) with upwind schemes for the saturation transport equation, in which the coupled nonlinear problem is solved in the framework of operator decomposition. The post-processing technique is applied to CGFEM solutions to obtain locally conservative fluxes which ensures accuracy and robustness of the FVM solver for the saturation transport equation. We applied both upwind scheme and upwind scheme with slope limiter for FVM on triangular meshes in order to eliminate the non-physical oscillations. Various numerical examples are presented to demonstrate the performance of the overall methodology.
\end{abstract}

\paragraph*{Keywords}
CGFEM; Local conservation; post-processing; two-phase flow; porous media


\section{Introduction} \label{sec:intro}

\renewcommand{\baselinestretch}{1.5}
The standard continuous Galerkin finite element methods (CGFEMs) are widely used for solving various kinds of partial differential equations \cite{brenner2008mathematical, thomas2013numerical} but are rarely utilized in two-phase flow simulation, which is due to the lack of local conservation property of the CGFEM solutions \cite{bush2013application, zhang2013locally}. There are application problems, such as multiphase flow (MPF) in fluid dynamics \cite{leveque2002finite} and drift-diffusion in electrodynamics \cite{deng2015construction, taflove2000computational}, which require local conservation property on their numerical solutions.  
\begin{figure}[ht]
\centering
\includegraphics[height=4.0cm]{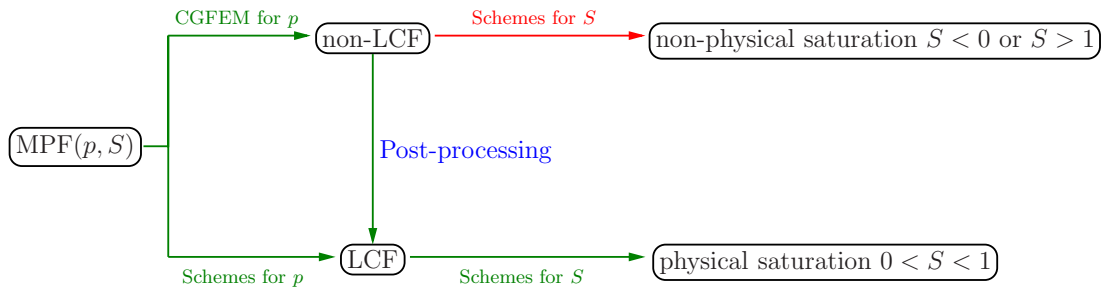} 
\caption{Illustration of why local conservation is required. LCF refers to locally conservative fluxes, $S$ refers to saturation, and $p$ refers to pressure.}
\label{fig:why}
\end{figure}
A simple illustration of why local conservation property is required in multiphase flow simulation is shown in Figure \ref{fig:why}. Instead of using the standard CGFEMs for such application problems, many endeavors have utilized mixed finite element methods (MFEMs) (see for example \cite{chavent1991unified, cui2007combined, kou2014analysis, murad2013new, ewing1983mathematics, russell1983finite, wang2000approximation}), FVM (see for example \cite{forsyth1991control, michel2003finite, monteagudo2004control, reichenberger2006mixed}), and discontinuous Galerkin (DG) methods (see for example \cite{epshteyn2007fully, epshteyn2009analysis, fuvcik2011discontinous, hoteit2008efficient, nayagum2004modelling}).

The paper \cite{kou2014analysis} analyzed a combined method for incompressible two-phase flow in porous media. The method consists of the MFEM for Darcy equation and DG for the transport equation. Under appropriate conditions, the authors established existence and uniqueness of the numerical solutions by using a constructive approach as well as derived the optimal error estimates for saturation in $L^2(H^1)$. Wang and his collaborators developed an Eulerian-Lagrangian localized adjoint method (ELLAM) to discretize the transport component  of the miscible fluid flows in porous media \cite{wang2000approximation}, where the MFEM was used to approximate the pressure and Darcy velocity.

A finite volume scheme for two-phase immiscible flow in porous media is presented and analyzed in \cite{michel2003finite}. The method combines a centered finite volume scheme for the Chavent global pressure equation and an upwind time implicit finite volume scheme for the transport equation. The capillary pressure is included in the model and error estimates of the approximate solutions are established.  A fully coupled implicit DG is developed in \cite{epshteyn2007fully} for both the pressure and saturation equations for two-phase flow model in porous media. The proposed schemes do not require slope limiting or upwind stabilization techniques but pay a price of solving much larger linear algebra systems than that of CGFEM. The optimal convergence behavior and error estimates are studied in \cite{epshteyn2009analysis}.

In \cite{ginting2015application}, a fairly recently developed method called weak Galerkin finite element method (WGFEM), was applied to solve Darcy equation in two-phase model for subsurface flow. The numerical normal velocity obtained from WGFEM satisfies local conservation property, which ensures robustness of the finite volume solver for the transport saturation equation. The paper \cite{zhang2013locally} studied a locally conservative Galerkin (LCG) finite element method for two-phase flow simulations in heterogeneous porous media. The method utilizes the property of local conservation at steady state conditions to post-process the numerical fluxes at the element boundaries at every time step. Other numerical methods for two-phase flow simulations can be found for instance in \cite{du2010efficient, efendiev2012robust}.

In this paper, we apply the classical CGFEM supplied with a post-processing technique to the Darcy equation in the two-phase flow model. The nontrivial but simple post-processing technique developed in \cite{deng_construction} is applied to linear and quadratic CGFEM solutions to generate locally conservative fluxes, which will be utilized in the explicit FVM time discretization for solving the saturation transport equation. Similar post-processing techniques for linear CGFEM can be found for example in \cite{bush2015locally, bush2013application, cockburn2007locally, deng2015construction, deng_locally, gmeiner2014local, hughes2000continuous, nithiarasu2004simple, sun2009locally,  thomas2008element, thomas2008locally, zhang2013locally} and for high order CGFEM can be found for example in \cite{cockburn2007locally}. In order to eliminate the potential non-physical oscillations of the saturation profile at the front, we combine the FVM with upwind schemes. In this paper, we present upwind scheme and upwind with slope limiter for the triangular meshes.

The rest of this paper is organized as follows. Section \ref{sec:tpfm} sets up the two-phase model problem and Section \ref{sec:meth} outlines our numerical methodology for solving the two-phase flow problem that is based on operator splitting. Section \ref{sec:pp} presents the post-processing technique for CGFEMs to obtain locally conservative fluxes. Upwind scheme and upwind scheme with slope limiter for triangular meshes are described in Section \ref{sec:upwind}. Section \ref{sec:num} presents various numerical results on the application of the post-processing techniques and upwind and upwind with slope limiter for a couple of examples in single-phase flow and two-phase flow. Conclusion is given in Section \ref{sec:conclusion}.

\section{Two-Phase Flow Modelling} \label{sec:tpfm}
Two-phase flow is a particular example of multiphase flow typically occuring in an immiscible system containing a wetting phase and a non-wetting phase, for instance water and oil, with a meniscus separating the two phases.  We consider a heterogeneous oil reservoir which is confined in a domain $\Omega$. The governing equations consist of Darcy's law and a statement of conservation of mass and are expressed as
\begin{equation} \label{eq:tpf}
\begin{cases}
\nabla \cdot \boldsymbol{v} = q, \quad \text{where} \quad \boldsymbol{v} = - \lambda(S) \kappa (\boldsymbol{x}) \nabla p, \quad \text{in} \quad  \Omega \times (0, T], \\
\partial_t S + \nabla \cdot (f(S) \boldsymbol{v} ) = q_w, \quad \text{in} \quad \Omega \times (0, T], \\
p(\boldsymbol{x}) = g_D(\boldsymbol{x}), \quad \boldsymbol{x} \in \Gamma_D, \\
\boldsymbol{v} \cdot \boldsymbol{n} = g_N(\boldsymbol{x}), \quad \boldsymbol{x} \in \Gamma_N, \\
S(\boldsymbol{x}, 0) = S_0(\boldsymbol{x}), \quad \boldsymbol{x} \in \Omega,\\ 
\end{cases}
\end{equation}
where $\boldsymbol{v}$ is the Darcy velocity, $\kappa$ is the permeability coefficient, $S$ is the saturation of the wetting phase, $S_0$ is the initial saturation, $T$ is the finial time, and $\Gamma_D \cup \Gamma_N = \partial \Omega, \Gamma_D \cap \Gamma_N = \emptyset$. The total mobility $\lambda(S)$ and the flux function $f(S)$ are given by 
\begin{equation} \label{eq:tmff}
\lambda(S) = \frac{\kappa_{rw}(S)}{\mu_w} + \frac{\kappa_{ro}(S)}{\mu_o}, \quad f(S) = \frac{\kappa_{rw}(S)/\mu_w}{\lambda(S)},
\end{equation}
where $\kappa_{r\alpha}$ ($\alpha = w$ denoting the wetting phase and $\alpha = o$ denoting the non-wetting phase) is the relative permeability of the phase $\alpha$. For simplicity, here capillary pressure and gravity are not included in the model.

\section{Methodology}  \label{sec:meth}
\begin{figure}[ht]
\centering
\includegraphics[height=5.0cm]{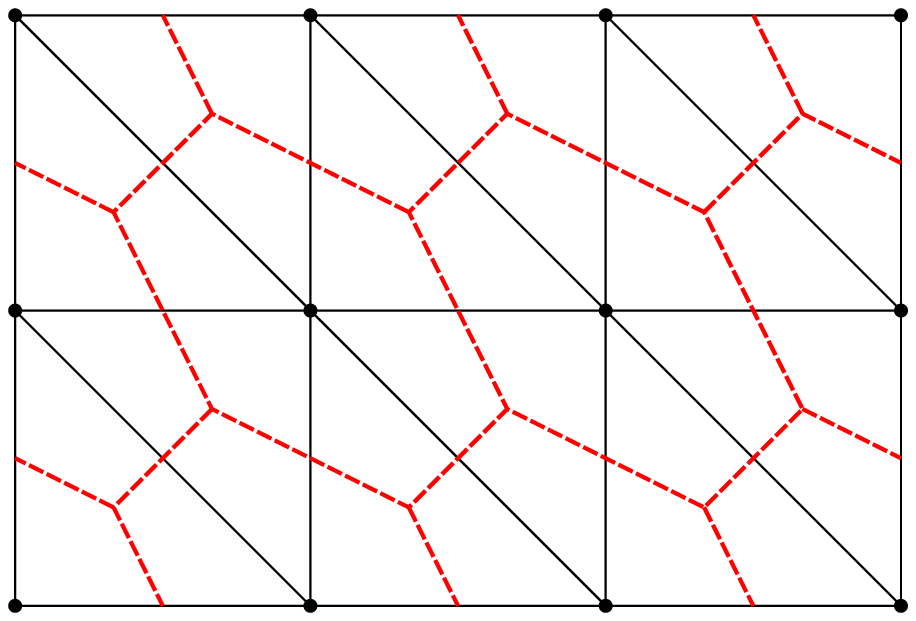}  \hspace*{0.45cm}
\includegraphics[height=5.0cm]{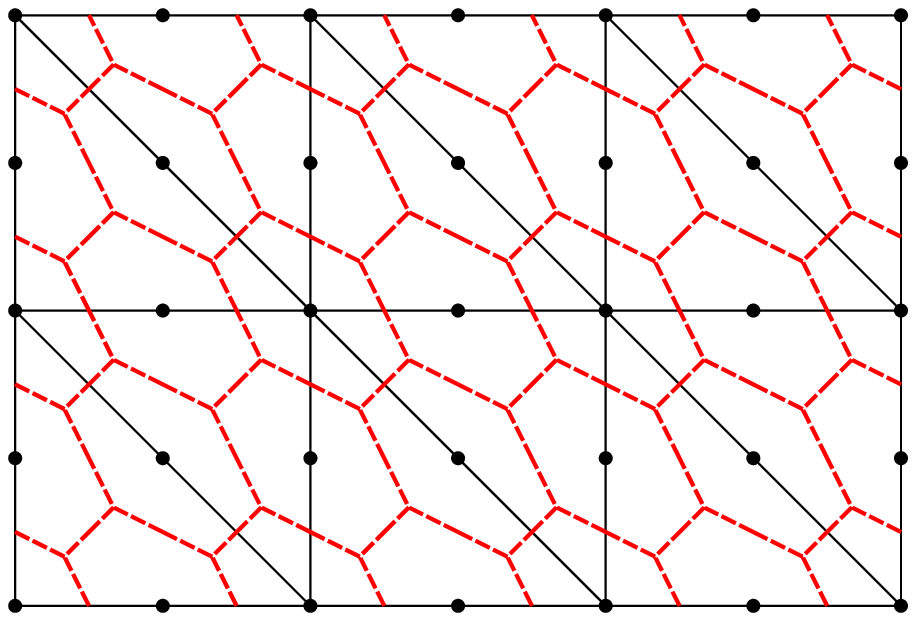}
\caption{Discretization of $\Omega$ into a collection of triangular elements $\mathcal{T}_h$ (black lines). The dots represent the nodal degrees of freedom for the approximated pressure using linear CGFEM (left plot) and quadratic CGFEM (right plot). The red dotted lines represent collection of nodal centered control volumes $\mathcal{D}_h$.}
\label{fig:meshes}
\end{figure}

In this section, we describe the methodology for solving the two-phase flow model problem. Let $\Omega$ be a bounded domain in $\mathbb{R}^2$ with Lipschitz boundary $\partial\Omega$. We consider a partition $\mathcal{T}_h$, a mesh for CGFEMs, consisting of non-overlapping triangular elements $\tau$ such that $\overline\Omega = \bigcup_{\tau\in\mathcal{T}_h} \tau$ and a dual mesh $\mathcal{D}_h$ consisting of control volumes $C^z$ ($z$ is an associated nodal degree of freedom) such that $\overline\Omega = \bigcup_{C^z \in\mathcal{D}_h} C^z$ (see Figure \ref{fig:meshes}). We denote $Z = Z_{\text{in}} \cup Z_\text{d}$, where $Z_{\text{in}}$ is the set of interior nodal degrees of freedom and $Z_\text{d}$ is the set of corresponding points on $\partial\Omega$. We set $h=\max_{\tau\in\mathcal{T}_h} h_\tau$ where $h_\tau$ is defined as the diameter of $\tau$.  We denote the standard CGFEM space as
\begin{equation*}
V^k_h = \big\{w_h\in C(\overline\Omega): w_h|_\tau \in P^k(\tau), \ \forall \ \tau\in\mathcal{T}_h \ \text{and} \ w_h|_{\Gamma_D} = 0 \big\}, 
\end{equation*} 
where $P^k(\tau)$ is a space of polynomials with degree at most $k$ on $\tau$. In this paper, we focus on the case where $k=1, 2$.  We also denote the space of piecewise constant functions on $\mathcal{D}_h$ by
\begin{equation*}
W^0_h = \big\{w_h: w_h|_{C^z} \in P^0(C^z), \ \forall \ C^z \in \mathcal{D}_h \big\}.
\end{equation*}
Then a semi-discretization of \eqref{eq:tpf} is to find $p_h(t) = p_h$ with $(p_h - g_{D,h}) \in V^k_h$ and $S_h(t) = S_h \in W^0_h $, such that
\begin{equation} \label{eq:meth} 
\begin{cases}
a(S_h; p_h, v_h) = \ell_p(v_h) \quad \forall \ v_h \in V^k_h,  \\
( \partial_t S_h, w_h ) + b(p_h; S_h, w_h) = \ell_S(w_h),  \quad \forall \ w_h \in W^0_h, \\
\end{cases}
\end{equation} 
where
\begin{equation*}
a(u; v, w) = \int_\Omega \lambda(u) \kappa  \nabla v \cdot \nabla w \ \text{d} \boldsymbol{x}, \quad b(p_h; v, w) = \sum_{C^z \in \mathcal{D}_h } \int_{\partial C^z} \boldsymbol{v}_h(p_h) \cdot \boldsymbol{n} f(v) w \ \text{d} l, 
\end{equation*}
\begin{equation*}
\quad \ell_p(w) = ( q,  w ) - \langle g_N, w \rangle_{\Gamma_N},  \quad  \ell_S(w) = ( q_w,  w ), 
\end{equation*}
$( \cdot, \cdot )$ is the usual $L^2$ inner product, $\langle \cdot, \cdot \rangle_{\Gamma}$ is the $L^2$ inner product on the curve $\Gamma$, and $g_{D,h} \in V_h^k$ can be thought of as the interpolant of $g_D$ using the usual finite element basis. Notice that here the first equation is a CGFEM formulation while the second equation is a FVM formulation.

The system \eqref{eq:meth} is coupled nonlinearly. There are several nontrivial issues inherent in this system that require appropriate attention and treatment. The main objective is to devise a numerical procedure that is stable and gives accurate approximate solutions. It should be noted however, practioners have developed legacy codes in a relatively independent manner for every equation in \eqref{eq:tpf}. From this vantage point, the best strategy is to construct a reliable framework that allows for taking advantage of the well established codes.

For the time discretization, we denote,  for the coarse time step discretization, a partition of $[0, T]$ as $0 = t_0 < t_1 < \cdots < t_n < \cdots < t_N = T$. Let $I_n = [t_{n-1}, t_n]$ and $\Delta t_n = t_n - t_{n-1}.$  On each coarse time step $I_n$, we denote, for the fine time step, a partition of $I_n$ as $t_{n-1} = t_{n_0} < t_{n_1} < \cdots <  t_{n_r} < \cdots < t_{n_R} = t_n$. Let $\Delta t_{n_r} = t_{n_r}  - t_{n_{r-1}} $ be the fine time step. Applying an explicit time stepping for the transport equation on fine time step interval 
$I_{n_r}$, we obtain 
\begin{equation} \label{eq:methsat} 
( S_h^{n_r}, w_h ) = ( S_h^{n_{r-1}}, w_h ) - \Delta t_{n_r} b(p_h; S_h^{n_{r-1}}, w_h) + \Delta t_{n_r}  \ell_S(w_h),  \quad \forall \ w_h \in W^0_h.
\end{equation} 

\noindent
To summarize, we present the following algorithm for the overall methodology on one time step.
\begin{algorithm} [H] 
\caption{Methodology for Time Marching on $(t_{n-1}, t_n]$}
\label{alg:timemarch}
\begin{algorithmic}
\State Let $S_h^{n-1}, p_h^{n-1}$ be available.
%
\State Set $S^{n, 0}_h = S_h^{n-1}$.
\State Loop $m = 1, 2, \cdots, M_n$ for iteration \\
\quad Find $p^{n, m}_h$ satisfying
\begin{equation}
a(S^{n, m-1}_h; p_h^{n,m}, v_h) = \ell_p(v_h), \quad \forall \ v_h \in V^k_h.
\end{equation}
\State \quad Post-process $p_h^{n,m}$ to be $\widetilde p_h^{n,m}$ to obtain locally conservative Darcy velocity. 
\State \quad Set $S_h^{n_0, m} = S^{n, m-1}_h$.
\State \quad Loop $r = 1, 2, \cdots, R$ for the fine time step
\State \qquad Find $S_h^{n_r, m}$ satisfying
\begin{equation} \label{eq:upwindsat}
( S_h^{n_r, m}, w_h ) = ( S_h^{n_{r-1}, m}, w_h ) - \Delta t_{n_r} b(\widetilde p_h^{n,m}; S_h^{n_{r-1}, m}, w_h) + \Delta t_{n_r}  \ell_S(w_h),  \quad \forall \ w_h \in W^0_h.
\end{equation} 
\State \quad End loop

\State \quad Set $S^{n, m}_h = S_h^{n_R, m}$
\State End loop
\State Set $S_h^n = S^{n, M_n}_h, p_h^n = p_h^{n,M_n}$.

\end{algorithmic}
\end{algorithm}

\section{A Post-processing Technique}  \label{sec:pp}

The numerical Darcy velocity $\boldsymbol{v}$ directly calculated from the CGFEM solution does not satisfy local conservation and are not continuous across the element interfaces, which are desired properties for solving the transport equation. We present in this section a post-processing technique developed in \cite{deng_construction} for the CGFEM solutions that aims at producing locally conservative flux.

\subsection{Auxiliary Elemental Problem} \label{sec:bvp}

We would like to have Darcy velocity to satisfy local conservation on control volumes which are generated from the dual mesh $\mathcal{D}_h$ whose normal component is continuous at the boundaries of these control volumes (see Figure \ref{fig:meshes}). The construction of the dual mesh on a single element is shown in Figure \ref{fig:dualelem}. In details, for linear CGFEM, we connect the barycenter of the triangle to the middle points of the edges of the triangle, and for quadratic CGFEM, we divide the triangle into four sub-triangles and then do the same for each sub-triangle as for the linear case. Each control volume corresponds to a degree of freedom in CGFEMs. 

For simplicity, we denote $K = \lambda(S) \kappa (\boldsymbol{x})$ in \eqref{eq:tpf}. We post-process the pressure solution $p_h$  solved by CGFEM to obtain 
$\widetilde {\boldsymbol{\nu}}_h = - K \nabla \widetilde{p}_h $ such that it is continuous at the boundaries of each control volume and satisfies the local conservation property in the sense
\begin{equation} \label{eq:cvconservation}
\int_{\partial C^\xi} \widetilde {\boldsymbol{\nu}}_h \cdot \boldsymbol{n}  \ \text{d} l = \int_{C^\xi} q \ \text{d} \boldsymbol{x},
\end{equation}
where $C^\xi$ is a control volume surrounding a node $\xi$. This node is associated with a degree of freedom in $V^k_h.$

\begin{figure}[ht]
\centering
\includegraphics[height=5.0cm]{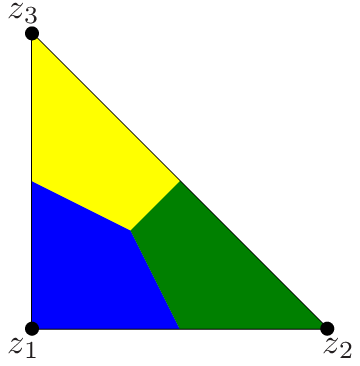}  \hspace*{1.85cm}
\includegraphics[height=5.0cm]{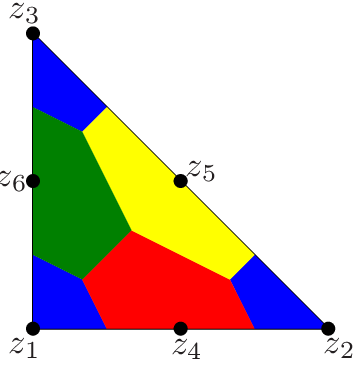}
\caption{Construction of nodal based control volumes for $V^1_h$ (left) and $V^2_h$ (right).}
\label{fig:dualelem}
\end{figure}

In order to obtain locally conservative fluxes on each control volume, we set and solve an elemental/local problem on $\tau$. Let $N_k = \frac{1}{2}(k+1)(k+2)$ be the total number of degrees of freedom on a triangular element for $V_h^k$. We denote the collection of those degrees of freedom by $s(\tau, k) = \{ z_j \}_{j=1}^{N_k}$ and partition each element $\tau$ into $N_k$ non-overlapping polygonals $\{ t_{z_j} \}_{j=1}^{N_k}$; see Figure \ref{fig:dualelem}.
For $t_\xi$ with $\xi \in s(\tau,k)$, we make a decomposition $\partial t_\xi = ( \partial \tau \cap \partial t_\xi ) \cup ( \partial C^\xi \cap \partial t_\xi ).$  We also define the average on an edge or part of the edge sharing by two elements $\tau_1$ and $\tau_2$ for vector $\boldsymbol{v}$ as
\begin{equation} \label{def:ave}
\{ \boldsymbol{v} \} = \frac{\boldsymbol{v}|_{\tau_1} + \boldsymbol{v}|_{\tau_2} }{2}. 
\end{equation}

Let $W^0(\tau)$ be the space of piecewise constant functions on element $\tau$ such that $W^0(\tau) = \text{span} \{ \psi_\eta \}_{\eta \in s(\tau,k)}$, where $\psi_\eta$ is the characteristic function of the polygonal $t_\eta$, i.e.,
\begin{equation}
\psi_\eta(\boldsymbol{x}) = \Big\lbrace\begin{array}{c} 1 \quad \text{if} \quad \boldsymbol{x} \in t_\eta  \\ 0 \quad \text{if} \quad \boldsymbol{x} \notin t_\eta \end{array}.
\end{equation}
We define a map $I_\tau: H^1(\tau) \rightarrow W^0(\tau)$ with $I_\tau w = \displaystyle \sum_{\xi \in s(\tau,k)} w_\xi \psi_\xi$, where $w_\xi = w(\xi)$ for $w\in H^1(\tau)$. We also define the following bilinear forms
\begin{equation} \label{eq:bebiforms}
b_\tau( v, w)  = -\sum_{\xi \in s(\tau,k)} \int_{\partial C^\xi \cap \partial t_\xi} K  \nabla v  \cdot \boldsymbol{n} I_\tau w \ \text{d} l, \qquad e_\tau(v, w) = \int_{\partial \tau} \{ K \nabla v \} \cdot \boldsymbol{n}  w  \ \text{d} l.
\end{equation}

Let $ V^k_h(\tau) = \text{span}\{  \phi_\eta \}_{ \eta \in s(\tau,k) }$ where $ \phi_\eta$ can be thought as the usual
nodal $\eta$ basis function restricted to element $\tau$.
The elemental calculation for the post-processing is to find $\widetilde p_{\tau, h} \in V^k_h(\tau) $ satisfying 
\begin{equation} \label{eq:bvpvf}
b_\tau(\widetilde p_{\tau, h}, w) = \ell_\tau ( I_\tau w - w ) + a_\tau(p_h, w) + e_\tau(p_h, I_\tau w - w), \quad \forall \ w \in V^k_h(\tau),
\end{equation}
where $\displaystyle{a_\tau (w, v) = \int_\tau K\nabla w \cdot \nabla v  \ \text{d} \boldsymbol{x} }.$

\newtheorem{lem}{Lemma}[section]
\begin{lem}  \label{lem:ebvp}
The variational formulation \eqref{eq:bvpvf} has a unique solution up to a constant.
\end{lem}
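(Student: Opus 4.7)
The plan is to reduce \eqref{eq:bvpvf} to a square linear system on the finite-dimensional space $V^k_h(\tau) \cong \mathbb{R}^{N_k}$ and then invoke the finite-dimensional Fredholm alternative. The statement splits into two tasks: (i) show that the bilinear form $b_\tau(\cdot,\cdot)$ has a one-dimensional right kernel and a one-dimensional left kernel, both spanned by the constant function, and (ii) show that the right-hand side of \eqref{eq:bvpvf} is compatible, i.e., vanishes when tested against the left kernel.

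Compatibility and the presence of constants in both kernels are straightforward. Because $\{\psi_\eta\}_{\eta\in s(\tau,k)}$ is a partition of unity on $\tau$, the interpolant satisfies $I_\tau(1) = \sum_\eta \psi_\eta = 1$, so $I_\tau w - w = 0$ when $w \equiv 1$; this kills both $\ell_\tau(I_\tau w - w)$ and $e_\tau(p_h, I_\tau w - w)$, while $a_\tau(p_h, 1) = 0$ since $\nabla 1 = 0$. On the $b_\tau$ side, $b_\tau(v, 1) = -\sum_\xi \int_{\partial C^\xi \cap \partial t_\xi} K\nabla v \cdot \boldsymbol{n} \, dl$, and the union $\bigcup_\xi(\partial C^\xi \cap \partial t_\xi)$ consists of the interior edges of the partition of $\tau$ into $\{t_\xi\}$, with each such edge appearing in exactly two terms with opposite outward normals. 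These cancel pairwise, placing constants in the left kernel of $b_\tau$. Since $\nabla v = 0$ trivially gives $b_\tau(v, w) = 0$ for all $w$, constants also lie in the right kernel.

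The main obstacle is to prove that the right kernel is no larger than one-dimensional. Testing \eqref{eq:bvpvf} with $w = \phi_\eta$, the nodal basis function for $\eta \in s(\tau,k)$, and using $I_\tau \phi_\eta = \psi_\eta$ (because $\phi_\eta(\xi) = \delta_{\eta\xi}$), the condition $b_\tau(v, \phi_\eta) = 0$ collapses to the local flux balance
\begin{equation*}
\int_{\partial C^\eta \cap \partial t_\eta} K \nabla v \cdot \boldsymbol{n} \, dl = 0, \qquad \eta \in s(\tau,k).
\end{equation*}
These $N_k$ equations carry exactly one linear dependence (their sum is zero by the same pairwise cancellation as above), yielding $N_k - 1$ independent constraints on the $(N_k-1)$-dimensional quotient $V^k_h(\tau)/\text{constants}$, so one needs only injectivity. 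For $k=1$, $\nabla v$ is a constant vector in $\mathbb{R}^2$ and the constraints become $\boldsymbol{c}_\eta \cdot \nabla v = 0$ with $\boldsymbol{c}_\eta := \int_{\partial C^\eta \cap \partial t_\eta} K \boldsymbol{n} \, dl$; injectivity follows from a direct geometric check that any two of the three vectors $\boldsymbol{c}_\eta$ on the barycentric subdivision in Figure \ref{fig:dualelem} are linearly independent. The $k=2$ case is handled analogously on each of the four sub-triangles produced by the quadratic construction. Combining this injectivity with the compatibility established above yields existence and uniqueness of $\widetilde p_{\tau,h}$ up to an additive constant.
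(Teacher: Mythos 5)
Your overall structure is sound and, on the compatibility side, coincides with the paper's own argument: the paper also tests with the nodal basis $\{\phi_\xi\}$, uses $\sum_\xi \phi_\xi = 1$, $\nabla\big(\sum_\xi\phi_\xi\big)=\boldsymbol{0}$ and the pairwise cancellation on interior interfaces to show that the summed right-hand side reduces to $\int_\tau q\,\text{d}\boldsymbol{x}$ against a total flux of the same value, i.e.\ that the data are orthogonal to the constant in the left kernel. Where you genuinely diverge is that you explicitly recognize that compatibility alone does not prove the lemma: one must also show that the right kernel of $b_\tau$ is \emph{exactly} the constants (equivalently, that the $N_k\times N_k$ matrix has rank $N_k-1$), and that without this neither uniqueness-up-to-constant nor, via the Fredholm alternative, existence actually follows. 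The paper's proof stops at the compatibility computation and appeals to the continuous Neumann-problem analogy, so your framing is the more honest one.

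The gap is that you do not close the step you correctly identify as ``the main obstacle.'' For $k=1$ the reduction to $\boldsymbol{c}_\eta\cdot\nabla v=0$ with $\boldsymbol{c}_\eta=\int_{\partial C^\eta\cap\partial t_\eta}K\boldsymbol{n}\,\text{d}l$ is right, and for elementwise-constant $K$ the three vectors are (up to the factor $K$) normals to the opposite edges, so any two are independent; but for a general positive, spatially varying $K$ inside $\tau$ the pairwise independence is asserted, not proved, and is not obviously true. More seriously, the $k=2$ case cannot be ``handled analogously on each of the four sub-triangles'': a quadratic $v$ on $\tau$ has a \emph{linear}, globally coupled gradient, not an independent constant gradient per sub-triangle, so the six constraints $\int_{\partial C^\eta\cap\partial t_\eta}K\nabla v\cdot\boldsymbol{n}\,\text{d}l=0$ form a genuinely coupled system on the five-dimensional quotient $V^2_h(\tau)/\mathbb{R}$, and showing it has trivial kernel requires an explicit rank computation (or an energy-type argument) that your sketch does not supply. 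As written, the proposal establishes compatibility and the inclusion of constants in both kernels, but the injectivity claim --- the part that actually delivers ``unique up to a constant'' --- remains unproved for $k=2$ and incompletely proved for $k=1$.
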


\begin{proof}
We have $V_h^k(\tau) = \text{span}\{  \phi_\xi \}_{ \xi \in s(\tau,k) }$, where $\phi_\xi(\eta) = \delta_{\xi \eta}$ with $\delta_{\xi \eta}$ being the Kronecker delta,
for all $\xi, \eta \in s(\tau,k)$. By replacing the test function $w$ with  $\phi_\xi$ for all $\xi \in  s(\tau,k)$,
 \eqref{eq:bvpvf}  is reduced to 
\begin{equation}  \label{eq:cz10}
- \int_{\partial C^\xi \cap \partial t_\xi } K  \nabla \widetilde p_{\tau, h} \cdot \boldsymbol{n}  \ \text{d} l = \int_{t_\xi} q \ \text{d} \boldsymbol{x} - \ell_\tau (  \phi_\xi ) + a_\tau(p_h,  \phi_\xi ) + e_\tau( p_h, I_\tau  \phi_\xi -  \phi_\xi ), \quad \forall \ \xi \in  s(\tau,k).
\end{equation} 
This is a fully Neumann boundary value problem in $\tau$ with boundary condition satisfying 
\begin{equation} \label{eq:bvpb} 
- \int_{\partial \tau \cap \partial t_\xi } K  \nabla \widetilde p_{\tau, h}  \cdot \boldsymbol{n}  \ \text{d} l = \ell_\tau (  \phi_\xi ) - a_\tau(p_h,  \phi_\xi ) - e_\tau( p_h, I_\tau  \phi_\xi -  \phi_\xi ), \quad \forall \ \xi \in s(\tau,k).
\end{equation} 
To establish the existence and uniqueness of the solution, one needs to verify the compatibility condition \cite{evans2010partial}. We calculate 
\begin{equation*}
-\int_{\partial \tau} K  \nabla \widetilde p_{\tau, h} \cdot \boldsymbol{n} \ \text{d} l = \sum_{\xi \in s(\tau,k)} \Big( \ell_\tau (  \phi_\xi ) - a_\tau(p_h,  \phi_\xi ) - e_\tau( p_h, I_\tau  \phi_\xi -  \phi_\xi ) \Big).
\end{equation*}
Using the fact that $\sum_{\xi \in s(\tau,k)}   \phi_\xi = 1$ and linearity, we obtain
\begin{equation*}
\sum_{\xi\in s(\tau,k)} \ell_\tau(  \phi_\xi) = \sum_{\xi\in s(\tau,k)} \int_\tau q    \phi_\xi \ \text{d} \boldsymbol{x} = \int_\tau q \sum_{\xi\in s(\tau,k)}   \phi_\xi \ \text{d} \boldsymbol{x} = \int_\tau q \ \text{d} \boldsymbol{x}.
\end{equation*}
Using the fact that  $\nabla \big( \sum_{\xi\in s(\tau,k)}   \phi_\xi \big) = \boldsymbol{0} $ and linearity, we obtain
\begin{equation*}
\sum_{\xi\in s(\tau,k)} a_\tau(p_h,   \phi_\xi) = \sum_{\xi\in s(\tau,k)} \int_\tau K  \nabla p_h \cdot \nabla   \phi_\xi \ \text{d} \boldsymbol{x} = \int_\tau K  \nabla p_h \cdot \nabla \big( \sum_{\xi\in s(\tau,k)}   \phi_\xi \big) \text{d} \boldsymbol{x} = 0.
\end{equation*}
Also, we notice that 
\begin{equation*}
\sum_{\xi\in s(\tau,k)} e_\tau(p_h,   I_\tau  \phi_\xi -  \phi_\xi ) = \sum_{\xi\in s(\tau,k)}  \int_{\partial \tau \cap \partial t_\xi } \{ K  \nabla p_h \} \cdot \boldsymbol{n}  \ \text{d} l - \int_{\partial \tau} \{ K  \nabla p_h \} \cdot \boldsymbol{n} \sum_{\xi\in s(\tau,k)}   \phi_\xi \ \text{d} l = 0.
\end{equation*}
Combining these equalities, compatibility condition $\displaystyle{\int_{\partial \tau} - K  \nabla \widetilde p_{\tau, h} \cdot \boldsymbol{n} \ \text{d} l = \int_\tau q \ \text{d} \boldsymbol{x}}$ is verified. 
This completes the proof.
\end{proof}

\begin{remark}
The technique proposed here can be naturally generalized to rectangular elements. 
\end{remark}

\subsection{Elemental Linear System } \label{sec:llas}
We note that the dimension of $V^k_h(\tau)$ is $N_k$ and hence the variational formulation \eqref{eq:bvpvf} yields an $N_k$-by-$N_k$ linear algebra system. To see this, we represent $\widetilde p_{\tau, h} \in V^k_h(\tau)$ as
\begin{equation} \label{eq:ppsol}
\widetilde p_{\tau, h} = \sum_{\eta \in s(\tau,k)} \alpha_\eta   \phi_\eta
\end{equation}
in \eqref{eq:bvpvf} and replace the test function
by $\phi_\xi$ to yield
\begin{equation} \label{eq:axb}
A \boldsymbol{\alpha} = \boldsymbol{\beta},
\end{equation}
where 
$\boldsymbol{\alpha} \in \mathbb{R}^{N_k} $ whose entries are the nodal solutions in \eqref{eq:ppsol}, $\boldsymbol{\beta} \in \mathbb{R}^{N_k} $ with entries
\begin{equation}
\beta_\xi = \ell_\tau ( I_\tau \phi_\xi - \phi_\xi ) + a_\tau(p_h,   \phi_\xi) + e_\tau (p_h,   I_\tau \phi_\xi - \phi_\xi ), \quad \forall \ \xi \in s(\tau,k),
\end{equation}
and
\begin{equation}
A_{\xi \eta} = b_\tau( \phi_\eta,   \phi_\xi), \quad \forall \ \xi, \eta \in s(\tau,k).
\end{equation}

The linear system \eqref{eq:axb} is singular and there are infinitely many solutions since the solution to \eqref{eq:bvpvf} is unique up to a constant by Lemma \ref{lem:ebvp}. However, this does not cause any issue since
 to obtain locally conservative fluxes, the desired quantity from the post-processing
is $\nabla \widetilde{p}_{\tau,h}$, which is unique.

\subsection{Local Conservation} \label{sec:loccons}
 
At this stage, we verify the local conservation property \eqref{eq:cvconservation} on control volumes for the post-processed solution. It is stated in the following lemma.

\begin{lem}  \label{lem:localconserv}
The desired local conservation property \eqref{eq:cvconservation} is satisfied on the control volume $C^\xi$ where $\xi \in Z_{\text{in}}$.
\end{lem}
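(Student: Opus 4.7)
The plan is to realize $\int_{\partial C^\xi}\widetilde{\boldsymbol{\nu}}_h\cdot\boldsymbol{n}\,dl$ as a sum of elemental flux integrals, feed each summand into the key identity \eqref{eq:cz10} that was already derived in the proof of Lemma \ref{lem:ebvp}, and then check that the ``error'' terms collapse when we sum over all elements containing the interior node $\xi$.

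First I would write, for $\xi\in Z_{\text{in}}$,
\begin{equation*}
\int_{\partial C^\xi}\widetilde{\boldsymbol{\nu}}_h\cdot\boldsymbol{n}\,dl
=\sum_{\tau\ni\xi}\int_{\partial C^\xi\cap\partial t_\xi}(-K\nabla\widetilde{p}_{\tau,h})\cdot\boldsymbol{n}\,dl,
\end{equation*}
using that $C^\xi=\bigcup_{\tau\ni\xi}t_\xi$ and the decomposition $\partial t_\xi=(\partial\tau\cap\partial t_\xi)\cup(\partial C^\xi\cap\partial t_\xi)$ introduced in Section \ref{sec:bvp}. Inserting identity \eqref{eq:cz10} for each $\tau$ and summing yields
\begin{equation*}
\int_{\partial C^\xi}\widetilde{\boldsymbol{\nu}}_h\cdot\boldsymbol{n}\,dl
=\int_{C^\xi} q\,d\boldsymbol{x}-\sum_{\tau\ni\xi}\ell_\tau(\phi_\xi)+\sum_{\tau\ni\xi}a_\tau(p_h,\phi_\xi)+\sum_{\tau\ni\xi}e_\tau(p_h,I_\tau\phi_\xi-\phi_\xi),
\end{equation*}
so the task reduces to showing that the last three sums cancel.

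Next I would identify the first two sums with the global CGFEM form. Because $\xi$ is an interior node, the global nodal basis function $\Phi_\xi$ is supported in the patch $\bigcup_{\tau\ni\xi}\tau$, vanishes on $\partial\Omega$ (in particular on $\Gamma_D$ and $\Gamma_N$), and restricts to $\phi_\xi$ on each $\tau\ni\xi$. Hence $\Phi_\xi\in V_h^k$, and additivity over elements gives
\begin{equation*}
\sum_{\tau\ni\xi}\ell_\tau(\phi_\xi)=(q,\Phi_\xi)=\ell_p(\Phi_\xi),\qquad
\sum_{\tau\ni\xi}a_\tau(p_h,\phi_\xi)=a(S_h;p_h,\Phi_\xi).
\end{equation*}
Taking $v_h=\Phi_\xi$ in the first equation of \eqref{eq:meth} makes these two contributions cancel exactly.

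The step I expect to be the main obstacle is showing $\sum_{\tau\ni\xi}e_\tau(p_h,I_\tau\phi_\xi-\phi_\xi)=0$. I would argue edge by edge. For an interior edge shared by $\tau_1,\tau_2$ with $\xi$ as a common node, the average $\{K\nabla p_h\}$ is single-valued while the outward normals satisfy $\boldsymbol{n}_{\tau_1}=-\boldsymbol{n}_{\tau_2}$; continuity of the CGFEM basis across the edge gives $\phi_\xi|_{\tau_1}=\phi_\xi|_{\tau_2}$ on the edge, and the dual-mesh construction in Figure \ref{fig:dualelem} (splitting the edge at the midpoint/quadrature node) forces $I_{\tau_1}\phi_\xi=I_{\tau_2}\phi_\xi$ there as well, so the two contributions cancel. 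For any edge $e\subset\partial\tau$, $\tau\ni\xi$, whose other side either does not contain $\xi$ or lies on $\partial\Omega$, the node $\xi$ is not an endpoint of $e$, so both $\phi_\xi$ and $\psi_\xi=I_\tau\phi_\xi$ vanish identically on $e$ (for $k=1$ by the usual hat-function argument, for $k=2$ because the quadratic basis at $\xi$ vanishes on the edge opposite $\xi$, and the nodal subregion $t_\xi$ does not touch that edge). Combining these cases eliminates the $e_\tau$ sum and yields \eqref{eq:cvconservation}.
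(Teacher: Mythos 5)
Your proposal is correct and follows essentially the same route as the paper: sum the elemental identity \eqref{eq:cz10} over the patch of elements containing $\xi$, cancel the $a_\tau(p_h,\phi_\xi)-\ell_\tau(\phi_\xi)$ terms via the global CGFEM equation tested with the nodal basis function, and eliminate the $e_\tau$ contributions using the single-valuedness of the averaged flux together with opposite normals on shared edges and the vanishing of $\phi_\xi$ and $I_\tau\phi_\xi$ on edges not containing $\xi$. You merely spell out in more detail the two steps the paper labels ``obvious'' and ``straightforward.''
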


\begin{proof}
For a basis function $\phi_\xi$, let $\Omega^\xi = \cup_{i=1}^{N_\xi} \tau_i^\xi$ be its support. Noting that the gradient component is averaged, it is obvious that 
\begin{equation*}
\sum_{j=1}^{N_\xi}  \int_{\partial \tau_j^\xi} \{ K  \nabla p_{\tau_j, h} \} \cdot \boldsymbol{n} \phi_\xi  \ \text{d} l = 0 \qquad \text{and} \qquad \sum_{j=1}^{N_\xi} \int_{\partial \tau_j^\xi \cap \partial t_\xi } \{ K  \nabla p_{\tau_j, h} \} \cdot \boldsymbol{n}  \ \text{d} l = 0.
\end{equation*}
This implies that $\sum_{j=1}^{N_\xi} e_{\tau_j^\xi}(p_h, \phi_\xi) = 0.$ 
We notice that $\phi_\xi$ has a support on $N_\xi$ element, which yields
\begin{equation}
\sum_{j=1}^{N_\xi} \Big( a_{\tau_j^\xi}(p_h, \phi_\xi) - \ell_{\tau_j^\xi}(\phi_\xi) \Big) = 0
\end{equation}
Using this equality, straightforward calculation gives
\begin{equation*}
 \int_{ \partial C^\xi } - K  \nabla \widetilde p_{\tau, h} \cdot \boldsymbol{n} \ \text{d} l =  \int_{C^\xi} q \ \text{d} \boldsymbol{x} + \sum_{j=1}^{N_\xi} \Big( a_{\tau_j^\xi}(p_h, \phi_\xi) - \ell_{\tau_j^\xi}(\phi_\xi) - e_{\tau_j^\xi}(p_h, \phi_\xi) \Big) = \int_{C^\xi} q \ \text{d} \boldsymbol{x},
\end{equation*}
which completes the proof.
\end{proof}

\section{Upwind Schemes on Triangular Elements}  \label{sec:upwind}
In this section, we present the well-studied upwind scheme for solving the transport equation for saturation. The scheme is widely used on both rectangular and triangular meshes (see for example \cite{bush2013application, ginting2015application, thomas2013numerical}). It is known that (see for example \cite{bush2013application}) this scheme is stable and preserve positivity of the solution. However, it has the drawback of smearing shock / front feature of the solution. To alleviate this, we adopt a slope limiter procedure (see for example \cite{thomas2013numerical}). Application of limiters in upwind scheme has been relatively limited to rectangular meshes. In this section, we propose an upwind scheme with slope limiter for triangular meshes.
\begin{figure}[ht]
\centering
\includegraphics[height=7.0cm]{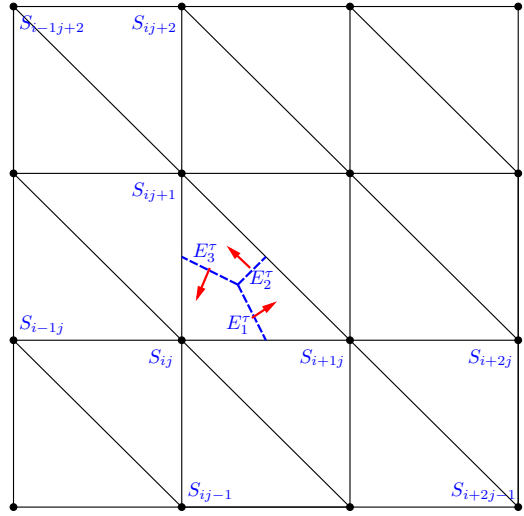} 
\caption{An example element to illustrate the upwind and upwind with slope limiter.}
\label{fig:upwind}
\end{figure}

The transport equation governing the saturation is solved by FVM with explicit time stepping \eqref{eq:upwindsat} combined with an upwind scheme for the integration term $\displaystyle{\int_{\partial C^z} \widetilde{\boldsymbol{v}}_h \cdot \boldsymbol{n} f(S_h) \ \text{d} l}.$ The upwind scheme is an approximation technique for this integration, which is aimed to stabilize the scheme and eliminate the potential non-physical oscillations of the saturation profile at the front. A review of upwind schemes on a rectangular mesh can be found in \cite{thomas2013numerical}. We now use Figure \ref{fig:upwind} to illustrate the upwind schemes on triangular meshes for linear CGFEM. For quadratic CGFEM, this can be done similarly.

For simplicity, we assume $f'(S) \geq 0$. In Figure \ref{fig:upwind}, taking the edge $E^\tau_1$ for example, to approximate $\int_{\partial E^\tau_1} \widetilde{\boldsymbol{v}}_h \cdot \boldsymbol{n} f(S_h) \ \text{d} l$, the upwind scheme is 
\begin{equation} \label{eq:firstorderupwind}
\int_{\partial E^\tau_1} \widetilde{\boldsymbol{v}}_h \cdot \boldsymbol{n} f(S_h) \ \text{d} l
\approx
\begin{cases}
\displaystyle{ f(S_{ij}) \int_{\partial E^\tau_1} \widetilde{\boldsymbol{v}}_h \cdot \boldsymbol{n} \ \text{d} l, \quad \text{if} \quad \int_{\partial E^\tau_1} \widetilde{\boldsymbol{v}}_h \cdot \boldsymbol{n} \ \text{d} l > 0, } \\
\displaystyle{ f(S_{i+1j}) \int_{\partial E^\tau_1} \widetilde{\boldsymbol{v}}_h \cdot \boldsymbol{n} \ \text{d} l, \quad \text{otherwise},}
\end{cases}
\end{equation}
while upwind scheme with slope limiter is
\begin{equation} \label{eq:secondorderupwind}
\int_{\partial E^\tau_1} \widetilde{\boldsymbol{v}}_h \cdot \boldsymbol{n} f(S_h) \ \text{d} l
\approx
\begin{cases}
\displaystyle{
f(\widetilde S_{ij}) \int_{\partial E^\tau_1} \widetilde{\boldsymbol{v}}_h \cdot \boldsymbol{n} \ \text{d} l, \quad \text{if} \quad \int_{\partial E^\tau_1} \widetilde{\boldsymbol{v}}_h \cdot \boldsymbol{n} \ \text{d} l > 0, } \\
\displaystyle {f(\widetilde S_{i+1j}) \int_{\partial E^\tau_1} \widetilde{\boldsymbol{v}}_h \cdot \boldsymbol{n} \ \text{d} l, \quad \text{otherwise},}
\end{cases}
\end{equation}
where 
\begin{equation} \label{eq:secondsat}
\begin{aligned}
\widetilde S_{ij} & = S_{ij} - \frac{\min \{ | S_{i-1j} - S_{ij} |, | S_{ij} - S_{i+1j} | \} }{2}, \\
\widetilde S_{i+1j} & = S_{i+1j} - \frac{\min \{ | S_{ij} - S_{i+1j} |, | S_{i+1j} - S_{i+2j} | \} }{2}. \\
\end{aligned}
\end{equation}

\section{Numerical Experiments}\label{sec:num}
In this section, we present various examples to illustrate the performance of the post-processing techniques applied to linear and quadratic CGFEMs.  We consider two different simulation scenarios, single phase flow simulation and two-phase flow simulation. Both simulations use Algorithm \ref{alg:timemarch} with only one iteration ($M_n=1$) that is equivalent to the usual operator splitting scheme commonly called Implicit Pressure Explicit Saturation (IPES). For both scenarios, we present errors in $L^2$-norm of the saturation profile at the final time. For the single phase flow, we present the local conservation errors (LCEs) of both CGFEM solution $p_h$ and the post-processed solution $\widetilde p_h$ as well as the $H^1$ semi-norm errors of $\widetilde p_h$. 

In all these examples, the unit domain $\Omega=[0, 1]^2$ is discretized uniformly for simplicity, namely, divide it into $N\times N$ rectangles and each rectangle is divided into two tirangles. For pressure, we consider homogeneous Neumann boundary conditions at the top and bottom of the domain and Dirichlet boundary conditions $p=1$ and $p=0$ at the left and right boundaries, respectively. We define permeability functions  
\begin{equation} \label{eq:ex11perm}
\kappa(\boldsymbol{x}) = \frac{1}{1-0.8\sin(6\pi x_1)} \cdot \frac{1}{1-0.8\sin(6\pi x_2)},
\end{equation}
\begin{equation} \label{eq:ex12perm}
\kappa(\boldsymbol{x}) = \frac{1}{0.25-0.999(x_1-x_1^2)\sin(11.2 \pi x_1)} \cdot \frac{1}{0.25-0.999(x_2-x_2^2)\cos(5.2 \pi x_2)},
\end{equation}
\begin{equation} \label{eq:ex13perm}
\kappa(\boldsymbol{x}) = \frac{e^{1-x_1}(x_2-x_2^2)}{x_1+1},
\end{equation}
flux function
\begin{equation} \label{eq:exff}
f(S) = \frac{S^2}{S^2/1 + (1-S)^2/5},
\end{equation}
and initial saturations
\begin{equation} \label{eq:ex12is}
S_0(\boldsymbol{x}) = 
\begin{cases}
1, & x_1 \leq 0, \\
0, & x_1 > 0,
\end{cases}
\end{equation} 
\begin{equation} \label{eq:ex11is}
S_0(\boldsymbol{x}) = 
\begin{cases}
1, & x_1 < 0, \\
\frac{1}{1+  x_1^2}, & x_1 \geq 0.
\end{cases}
\end{equation}

\subsection{Single Phase Flow} \label{sec:spf}
Single phase flow occurs when the pores are completely filled with only one fluid phase, which in the context of this paper is water. The two-phase flow model is naturally reduced to a model of single phase flow when we assume $\lambda(S)=1$ in \eqref{eq:tpf}. We consider the following examples.

\textbf{Example 1-1.} $\kappa(\boldsymbol{x})$ is \eqref{eq:ex11perm}, $f(S)$ is \eqref{eq:exff}, and $S_0(\boldsymbol{x})$ is \eqref{eq:ex12is}. 

\textbf{Example 1-2.} $\kappa(\boldsymbol{x})$ is \eqref{eq:ex12perm}, $f(S)$ is \eqref{eq:exff}, and $S_0(\boldsymbol{x})$ is \eqref{eq:ex12is}. 

\textbf{Example 1-3.} $\kappa(\boldsymbol{x})$ is \eqref{eq:ex13perm}, $f(S) = S$, and $S_0(\boldsymbol{x})$ is \eqref{eq:ex11is}.

\textbf{Example 1-4.} $\kappa(\boldsymbol{x})$ is \eqref{eq:ex11perm}, $f(S)$ is \eqref{eq:exff}, and $S_0(\boldsymbol{x})$ is \eqref{eq:ex11is}.

For Example 1-3, we can calculate the true solution. Denoting $Y=x_2-x_2^2$ in \eqref{eq:ex13perm} for Example 1-3, method of characteristics gives us the true saturation profile at time $t$ 
\begin{equation} \label{eq:truesat}
S(\boldsymbol{x}, t) = 
\begin{cases}
1, & x_1 < Yt, \\
\frac{1}{1+ (x_1-Yt)^2}, & x_1 \geq Yt.
\end{cases}
\end{equation}

\begin{figure}[ht]
\centering
\includegraphics[height=5.0cm]{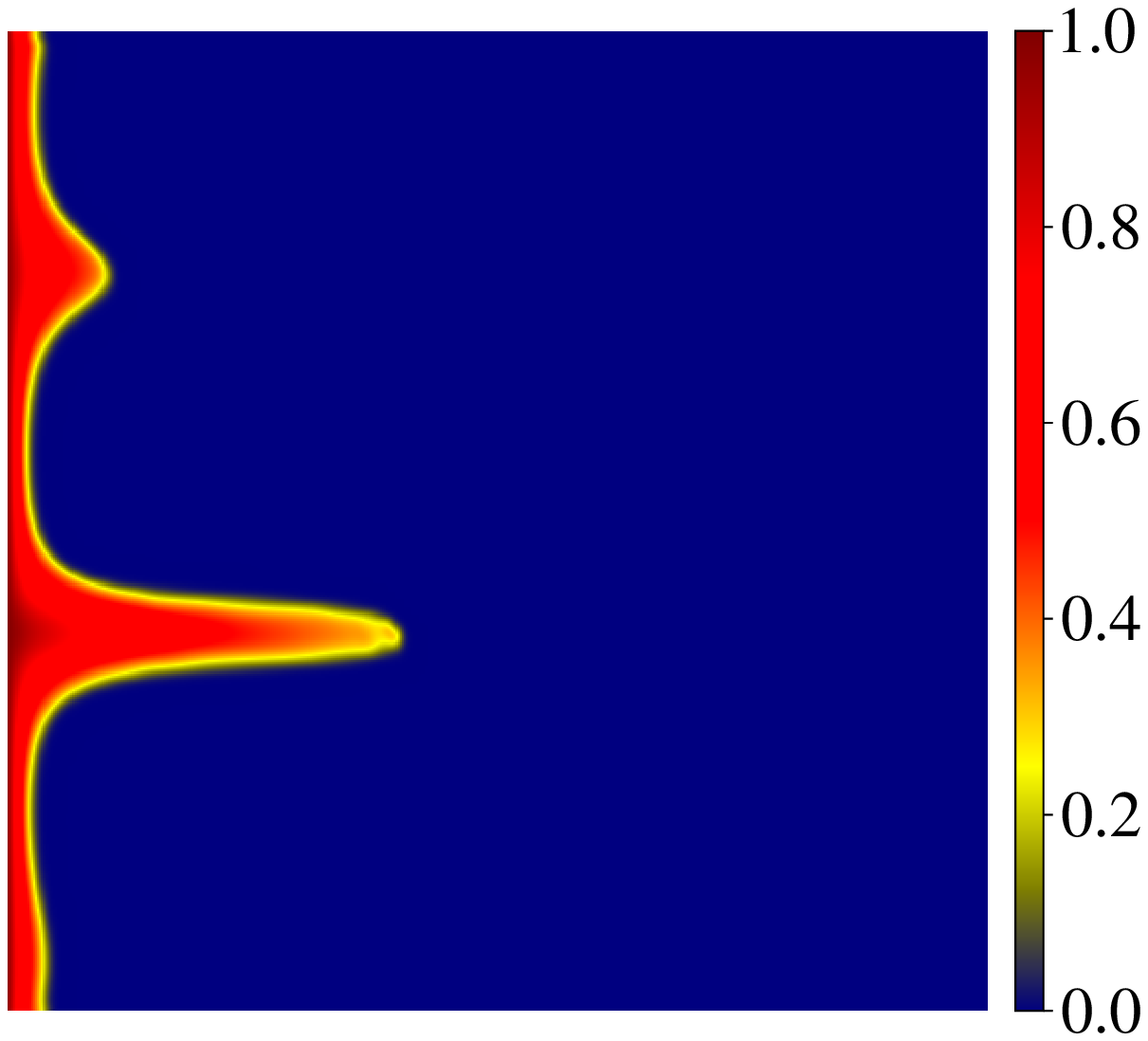}
\includegraphics[height=5.0cm]{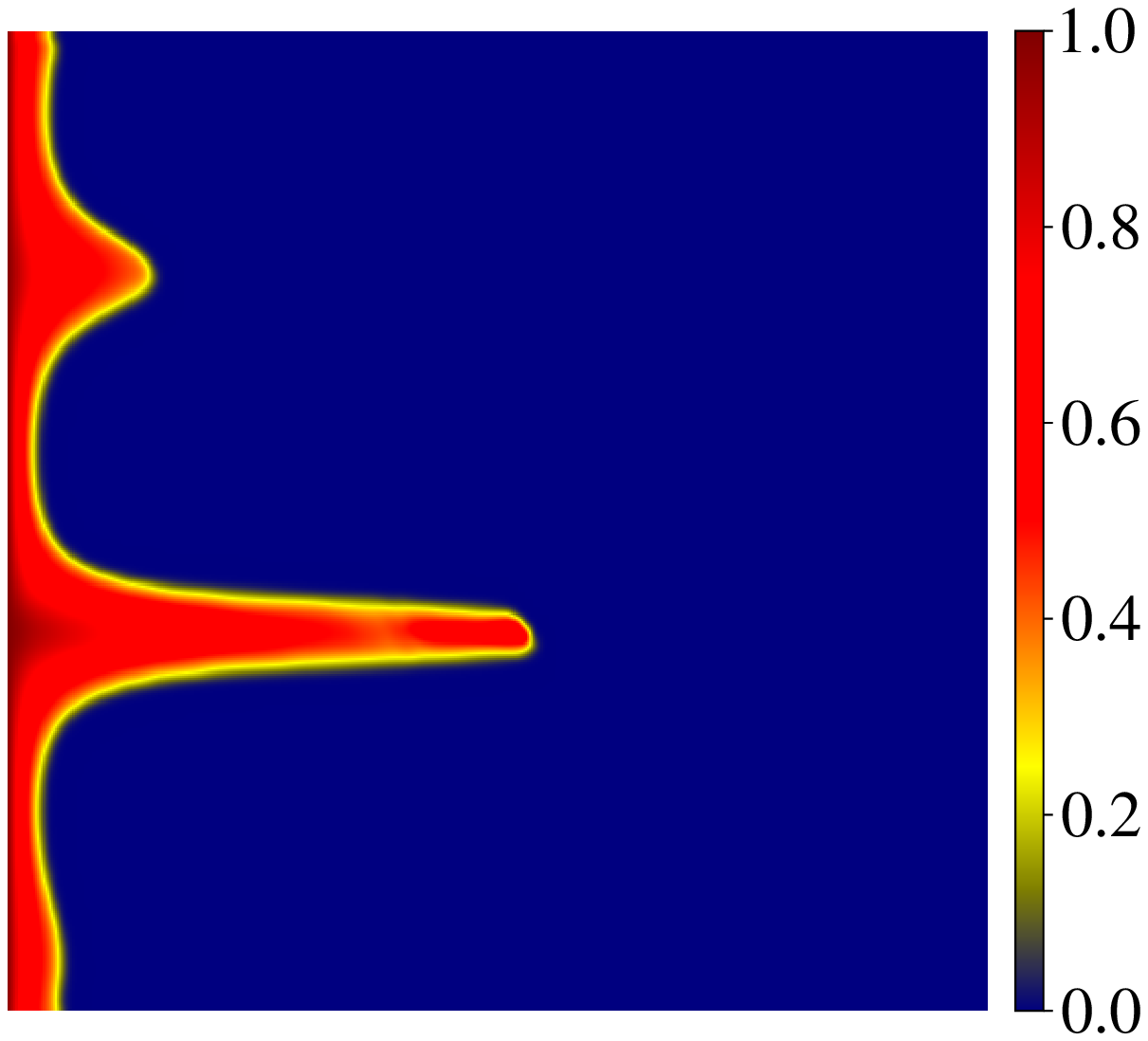}
\includegraphics[height=5.0cm]{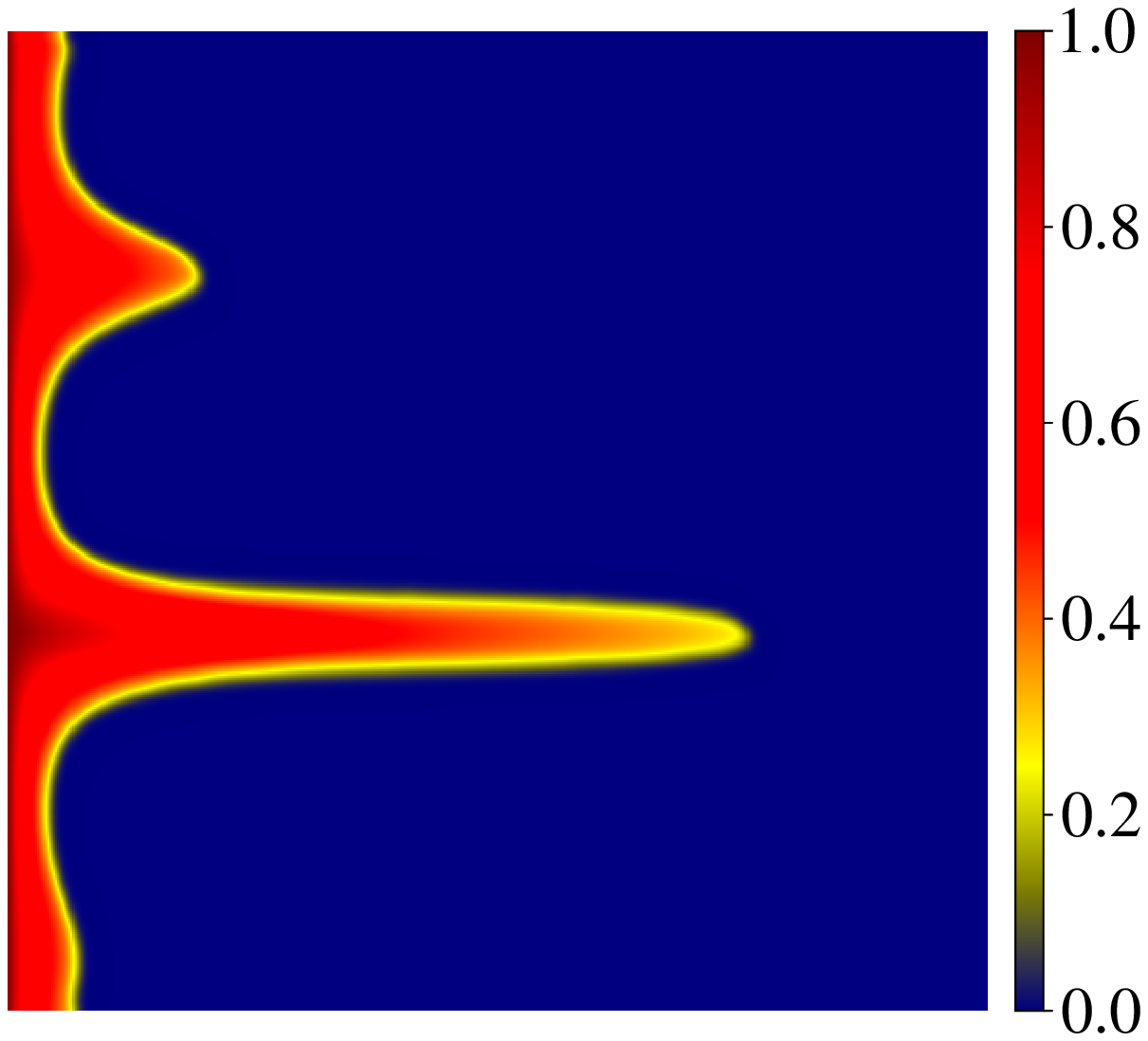}
\caption{Single phase flow simulation for Example 1-2 at time $T=0.001$ (left), $T=0.0015$ (middle), and $T=0.002$ (right).}
\label{fig:ex12}
\end{figure}

Figure \ref{fig:ex12} shows the single phase flow simulation for Example 1-2. The mesh configuration is $128\times 128$ and the number of time steps is $N_t = 500.$ Now we will study the numerical results of these examples.

\subsubsection{Local Conservation Errors} \label{sec:exlce}
\begin{figure}[ht]
\centering
\includegraphics[height=4.0cm]{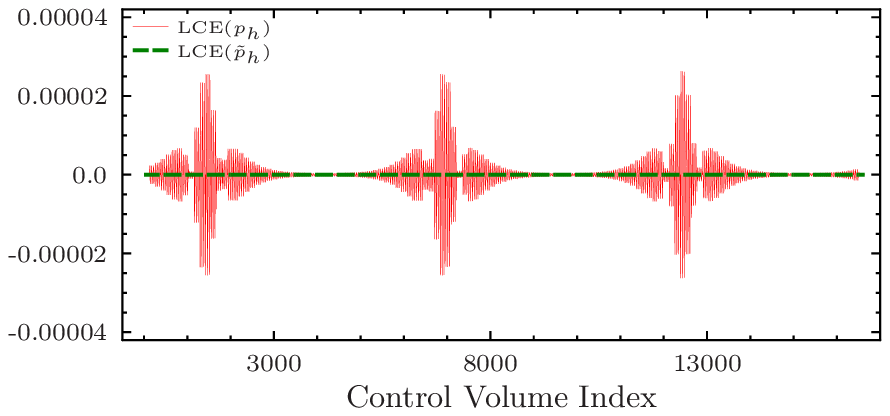}
\includegraphics[height=4.0cm]{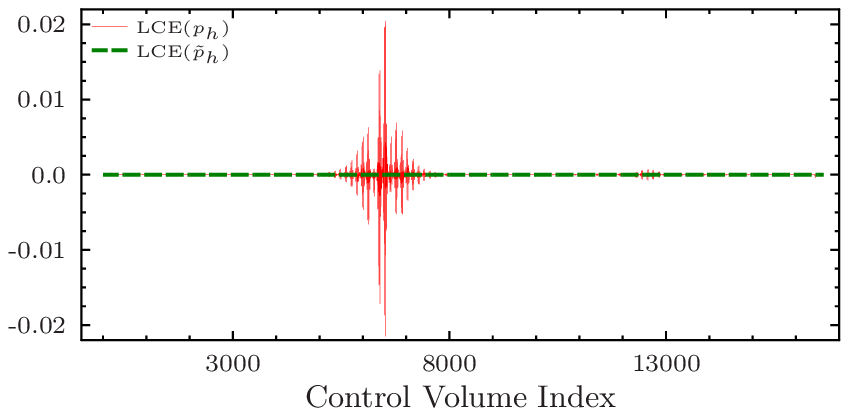} \\
\includegraphics[height=4.0cm]{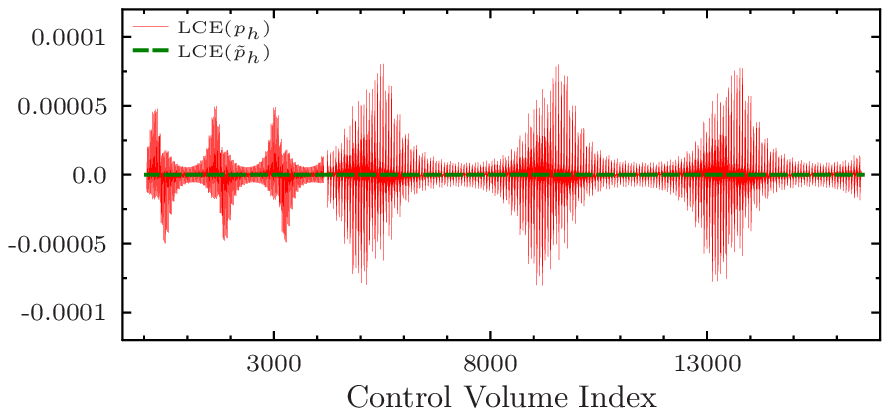}
\includegraphics[height=4.0cm]{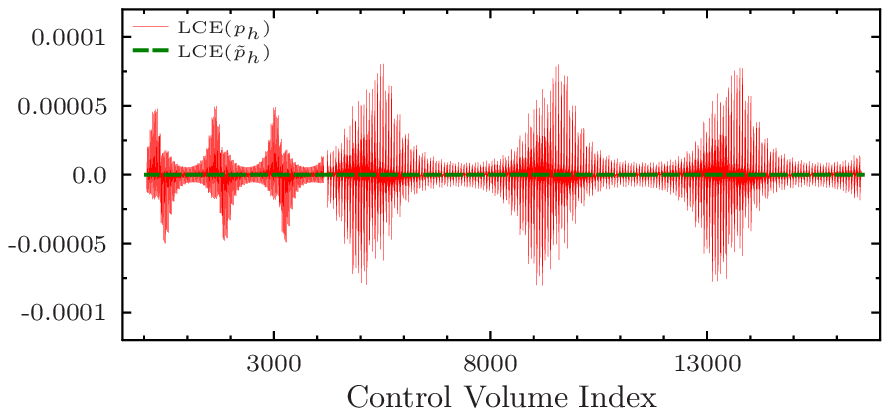} \\
\caption{Local conservation errors of $p_h$ (red line) and $\widetilde p_h$ (dashed-green line) using $V^1_h$ (top) and $V^2_h$ (bottom) for Example 1-1 (left) and Example 1-2 (right).}
\label{fig:ex1lce}
\end{figure}

To verify that the post-processed fluxes satisfy the desired local conservation property \eqref{eq:cvconservation} and thus illustrate the key effect of post-processing, we numerically calculate the local conservation errors defined as
\begin{equation}
\text{LCE}_z(w) = \int_{\partial C^z}  - \kappa(\boldsymbol{x})  \nabla w \cdot \boldsymbol{n}  \ \text{d} l - \int_{C^z} q \ \text{d} \boldsymbol{x}.
\end{equation}
Naturally, $\text{LCE}_z(w)=0$ indicates local conservation property \eqref{eq:cvconservation} is satisfied on the control volume $C^z$. 

The red-line plots in Figure \ref{fig:ex1lce} show the $\text{LCE}_z(p_h)$ solved by linear and quadratic CGFEMs for Example 1-1 and Example 1-2. The mesh configuration for both examples is $128\times 128$ for linear CGFEM and $64\times 64$ for quadratic CGFEM so that they share the same number of total degrees of freedom. The control volume indices in the figure are arranged by indices from vertices of the mesh and then the indices of the degrees of freedom on edges of elements (for quadratic CGFEM). The $\text{LCE}_z(p_h)$ for Example 1-1 is of scale of $10^{-5}$ while for Example 1-2 is of scale of $10^{-1}$. Example 1-2 has a larger scale because the permeability (\ref{eq:ex12perm}) is much more heterogeneous than the permeability (\ref{eq:ex11perm}) of Example 1-1. As we discussed before, these fluxes do not satisfy the local conservation property.

Now by applying the post-processing technique, the LCEs are practically negligible as shown by the green dotted lines in Figure \ref{fig:ex1lce}. To be specific, these errors are of scale of $10^{-15}$, which is mainly attributed to the errors in the linear algebra solver, application of numerical integration, and the machine precision. In an ideal situation, these errors should be zeros as discussed in Section \ref{sec:loccons}.

\subsubsection{$H^1$ Semi-norm Errors of the Post-Processed Solutions} \label{sec:h1seminorm}
Section \ref{sec:exlce} confirms that we do obtain locally conservative fluxes via the post-processing technique but it does not tell us how close the post-processed fluxes to the true fluxes. For this reason, we collect the $H^1$ semi-norm errors, which partially demonstrate the convergence property of the post-processed solution. Table \ref{tab:h1err} shows the $H^1$ semi-norm errors for both Example 1-1 and Example 1-2 using linear and quadratic CGFEMs. In the table, $N_{dof}$ refers to the total number of degrees of freedom. The convergence order of the errors corresponding to linear CGFEM is about 1 while 2 for quadratic CGFEM. Columns two and three, four and five share the same total number of degrees of freedom. Specifically, for Example 1-1, we use mesh configurations $40\times 40, 80\times 80, 160\times 160, 320\times 320$, and $640\times 640$ for linear CGFEM and $20\times 20, 40\times 40, 80\times 80, 160\times 160$, and $320\times 320$ for quadratic CGFEM, while for Example 1-2, we use mesh configurations $80\times 80, 160\times 160, 320\times 320$, and $640\times 640$ for linear CGFEM and $40\times 40, 80\times 80, 160\times 160$, and $320\times 320$ for quadratic CGFEM.

\begin{table}[ht]
\centering
\begin{tabular}{c|c|c|c|c|c}
\cline{2-5}
& \multicolumn{2}{c|}{Example 1-1} & \multicolumn{2}{c|}{Example 1-2} \\ \cline{1-5}
\multicolumn{1}{|c|}{$N_{dof}$} & $V^1_h$ & $V^2_h$  & $V^1_h$ & $V_h^2$  \\ \cline{1-5}
\hline
\multicolumn{1}{|c|}{$ 1681$} & $8.118\times 10^{-2}$ & $3.418\times 10^{-2}$ &  &   \\  \cline{1-5}
\multicolumn{1}{|c|}{$ 6561 $} & $3.991\times 10^{-2}$ & $7.333\times 10^{-3}$ & $7.084\times 10^{-2}$ & $3.505\times 10^{-2}$ \\  \cline{1-5}
\multicolumn{1}{|c|}{$25921$} & $1.986\times 10^{-2}$ & $1.762\times 10^{-3}$ & $3.430\times 10^{-2}$ & $6.625\times 10^{-3}$ \\  \cline{1-5}
\multicolumn{1}{|c|}{$103041$} & $9.918\times 10^{-3}$ & $4.363\times 10^{-4}$ & $1.699\times 10^{-2}$ & $1.468\times 10^{-3}$ \\  \cline{1-5}
\multicolumn{1}{|c|}{$410881$} & $4.957\times 10^{-3}$ & $1.089\times 10^{-4}$ & $8.473\times 10^{-3}$ & $3.578\times 10^{-4}$ \\  \cline{1-5}
\multicolumn{1}{|c|}{order} & 1.008 & 2.066 & 1.020 & 2.202 \\
\hline
\end{tabular}
\caption{$H^1$ semi-norm errors of $\widetilde p_h$ using $V^1_h$ and $V^2_h$ for Example 1-1 and Example 1-2.}
\label{tab:h1err}
\end{table}

Table \ref{tab:h1err} gives an indication that the post-processed solution converges to the true solution as  the mesh is refined. Furthermore, for the same number of degrees of freedom, the post-processing technique applied to quadratic CGFEM tends to provide more accurate locally conservative fluxes than the one applied to linear CGFEM. We will numerically show that it actually yields more accurate saturation profiles in the following section.

\subsubsection{Errors in the Approximate Saturation} 
Sections \ref{sec:exlce} and \ref{sec:h1seminorm} demonstrate the local conservation errors of fluxes and the $H^1$ semi-norm errors of the post-processed solutions. These are behaviors of numerical results for the Darcy's equation before solving the transport equation in the single phase flow simulation. 

Now we apply the post-processed locally conservative fluxes to the transport equation for saturation. We utilize both linear and quadratic CGFEMs for solving Darcy's equation while both upwind and upwind with limiter FVM for solving the transport equation. We expect to obtain more accurate saturation profiles by using quadratic CGFEM and upwind scheme with slope limiter. 

\begin{figure}[ht]
\centering
\includegraphics[height=5.0cm]{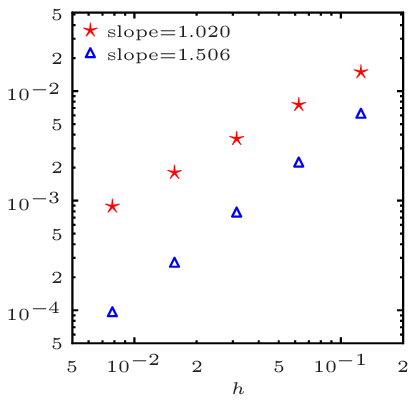} 
\includegraphics[height=5.0cm]{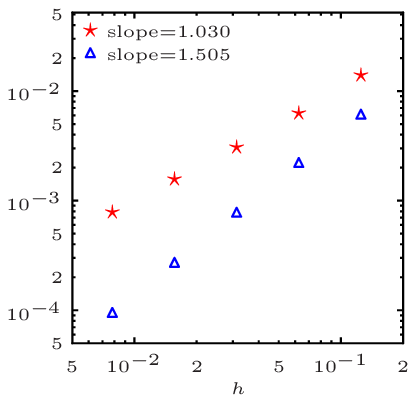}
\caption{Results for Example 1-3. $L^2$-norm errors of saturation using $V^1_h$ (left) and $V^2_h$ (right) with upwind (red star) and upwind with slope limiter (blue triangle). }
\label{fig:ex13err}
\end{figure}

\begin{table}[ht]
\centering
\begin{tabular}{c|c|c|c|c|c}
\cline{2-5}
& \multicolumn{2}{c|}{upwind} & \multicolumn{2}{c|}{upwind with slope limiter} \\ \cline{1-5}
\multicolumn{1}{|c|}{$N_{dof}$} & $V^1_h$ & $V^2_h$  & $V^1_h$ & $V_h^2$  \\ \cline{1-5}
\hline
\multicolumn{1}{|c|}{$ 81$} & $1.488\times 10^{-2}$ & $1.392\times 10^{-2}$ & $6.092\times 10^{-3}$ & $5.980\times 10^{-3}$  \\  \cline{1-5}
\multicolumn{1}{|c|}{$ 289 $} & $7.483\times 10^{-3}$ & $6.268\times 10^{-3}$ & $2.187\times 10^{-3}$ & $2.167\times 10^{-3}$ \\  \cline{1-5}
\multicolumn{1}{|c|}{$1089$} & $3.666\times 10^{-3}$ & $3.062\times 10^{-3}$ & $7.647\times 10^{-4}$ & $7.621\times 10^{-4}$ \\  \cline{1-5}
\multicolumn{1}{|c|}{$4225$} & $1.799\times 10^{-3}$ & $1.567\times 10^{-3}$ & $2.665\times 10^{-4}$ & $2.658\times 10^{-4}$ \\  \cline{1-5}
\multicolumn{1}{|c|}{$16641$} & $8.852\times 10^{-4}$ & $7.836\times 10^{-4}$ & $9.426\times 10^{-5}$ & $9.283\times 10^{-5}$ \\  \cline{1-5}
\multicolumn{1}{|c|}{order} & 1.020 & 1.030 & 1.506 & 1.505 \\
\hline
\end{tabular}
\caption{$L^2$-norm errors of saturation for Example 1-3 at $T=1$ for various scenarios.}
\label{tab:err}
\end{table}

\begin{figure}[ht]
\centering
\includegraphics[height=5.0cm]{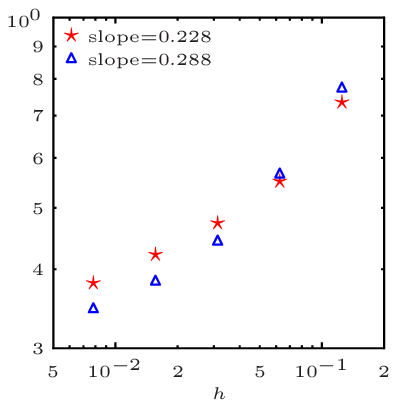} 
\includegraphics[height=5.0cm]{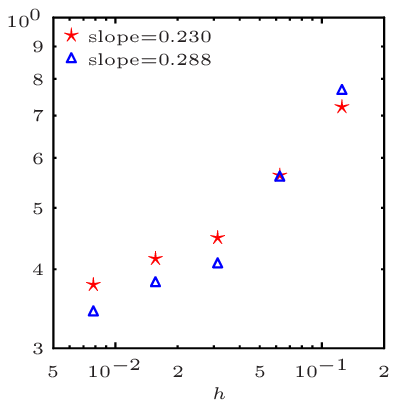}
\caption{Results for Example 1-1. $L^2$-norm errors of saturation using $V^1_h$ (left) and $V^2_h$ (right) with upwind (red star) and upwind with slope limiter (blue triangle). }
\label{fig:ex11err}
\end{figure}
\begin{figure}[ht]
\centering
\includegraphics[height=5.0cm]{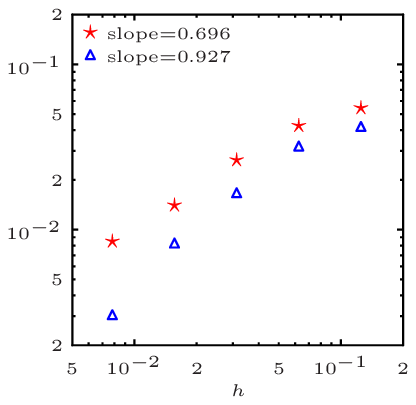} 
\includegraphics[height=5.0cm]{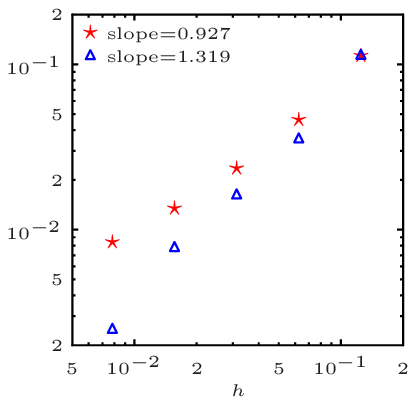}
\caption{Results for Example 1-4. $L^2$-norm errors of saturation using $V^1_h$ (left) and $V^2_h$ (right) with upwind (red star) and upwind with slope limiter (blue triangle). }
\label{fig:ex14err}
\end{figure}

Figure \ref{fig:ex13err} and Table \ref{tab:err} show $L^2$-norm errors of the saturation profile at final time $T=1$ for Example 1-3. The number of time steps is fixed and chosen to be $N_t = 1000$ to satisfy the CFL condition. The mesh configurations are chosen such that both linear and quadratic CGFEM have the same total number of degrees of freedom, $8\times 8, 16\times 16, 32\times 32, 64\times 64$, and $128\times 128$ for linear CGFEM and $4\times 4, 8\times 8, 16\times 16, 32\times 32$, and $64\times 64$ for quadratic CGFEM. For both linear and quadratic CGFEMs, the $L^2$-norm errors are of convergence order 1 when we apply upwind scheme and of convergence order 1.5 when we apply upwind scheme with slope limiter. This on one hand tells us that the error is dominated by the error inherited from upwind schemes rather than the errors inherited from CGFEMs for solving the Darcy's equation, and on the other hand, it confirms that upwind scheme with slope limiter results in a higher order of convergence. However, as we discussed in Section \ref{sec:h1seminorm}, quadratic CGFEM provides more accurate fluxes which should help to improve the saturation profile. That is why we see an improvement of error in $L^2$-norm by using quadratic CGFEM with same number of degrees of freedom as shown in the Table \ref{tab:err}.   In general, these errors are dominated by their contributions from the upwind scheme. This suggests that improvement gained from using the quadratic CGFEM is problem dependent. If we have a higher order scheme for solving the transport equation, we should see a significant improvement of using the higher order CGFEMs. This is a motivation for designing higher order schemes for solving the saturation transport equation.



Figure \ref{fig:ex11err} and Figure \ref{fig:ex14err} show the $L^2$-norm errors of Example 1-1 and Example 1-4. They have similar behavior of convergence. The difference between Example 1-1 and Example 1-4 is on their convergence orders which is attributed to the regularity of their initial saturation profiles. The initial saturation profile for Example 1-1 has a jump discontinuity which results in much lower convergence orders than that of Example 1-4 where initial saturation profile is more smooth. The final time for Example 1-1 and 1-4 is $T=0.05$ and the number of time steps is $N_t = 1000.$ Since there are no analytic solutions for these examples, we used a reference saturation profile obtained from the numerical saturation of the single phase model problem solved by quadratic CGFEM at the same final time with the number of time steps $N_t=2000$ using mesh configurations $256\times 256.$

\subsection{Two-Phase Flow} \label{sec:tpf}
For two-phase flow model, we use $q =0, q_w =0$ in \eqref{eq:tpf} with same boundary conditions posed for the pressure equation in single-phase flow in Section \ref{sec:spf}. We consider the following examples where total mobility function 
$
\lambda(S) = S^2/1 + (1-S)^2/5
$
, $f(S)$ is \eqref{eq:exff}, and 

\textbf{Example 2-1.} $\kappa(\boldsymbol{x})$ is \eqref{eq:ex11perm}, and $S_0(\boldsymbol{x})$ is \eqref{eq:ex12is}. 

\textbf{Example 2-2.} $\kappa(\boldsymbol{x})$ is \eqref{eq:ex11perm}, and $S_0(\boldsymbol{x})$ is \eqref{eq:ex11is}. 

\textbf{Example 2-3.} $\kappa(\boldsymbol{x})$ is \eqref{eq:ex12perm}, and $S_0(\boldsymbol{x})$ is \eqref{eq:ex11is}.

\begin{figure}[ht]
\centering
\includegraphics[height=5.0cm]{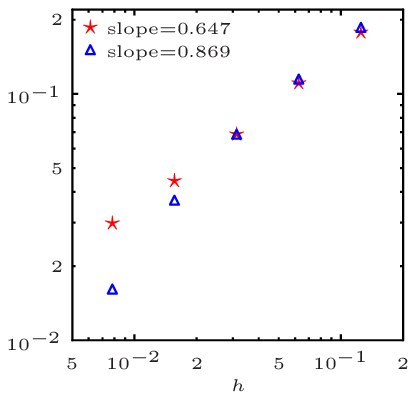} 
\includegraphics[height=5.0cm]{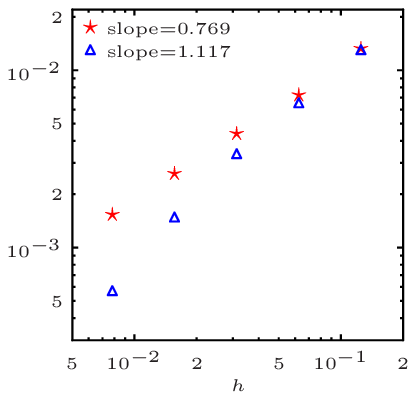} 
\includegraphics[height=5.0cm]{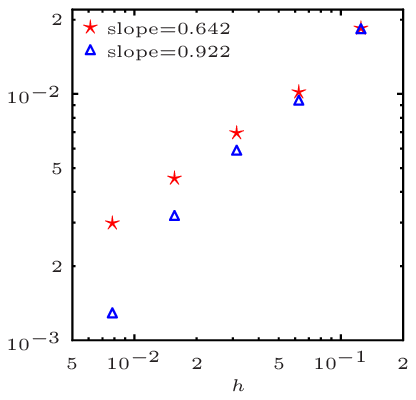} \\
\includegraphics[height=5.0cm]{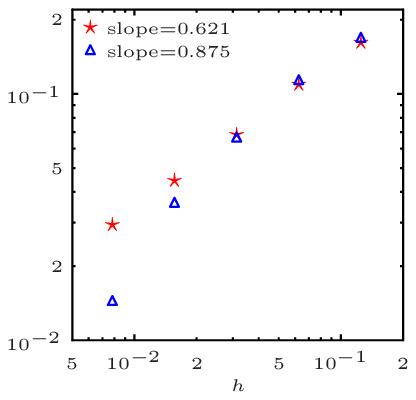}
\includegraphics[height=5.0cm]{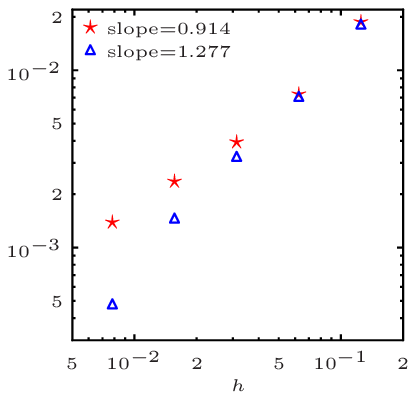}
\includegraphics[height=5.0cm]{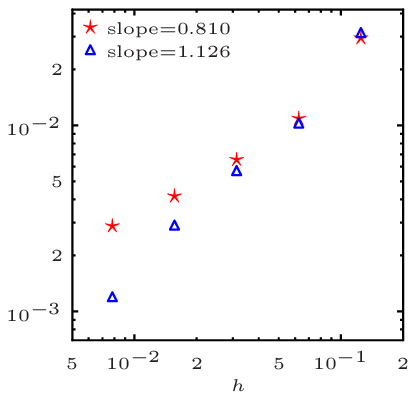}
\caption{$L^2$-norm errors of saturation using $V^1_h$ (top) and $V^2_h$ (bottom) with upwind (red star) and  upwind with slope limiter (blue triangle) for Example 2-1 (left), Example 2-2 (middle), and Example 2-3 (right).}
\label{fig:ex2err}
\end{figure}

Figure \ref{fig:ex2err} shows the numerical results of convergence behaviors of Examples 2-1, 2-2, and 2-3. For Examples 2-1 and 2-2, the final simulation time is $T=0.1$ while for Example 2-3, the final simulation time is $T=0.02.$ For all these examples, we utilized the coarse time step $N_{ct} = 50$ and fine time step $N_{ft} = 100.$ The error collected is the error of the numerical saturation profile to a reference numerical saturation profile at the final time, where the reference numerical saturation profile is the numerical solution at a very fine mesh. In all these examples, we used saturation solved by quadratic CGFEM combined with upwind scheme with slope limiter at the mesh configuration $256 \times 256$ as our reference solution. 

In general, Figure \ref{fig:ex2err} shows that for a given CGFEM, either linear CGFEM or quadratic CGFEM, upwind scheme with slope limiter provides more accurate saturation profile and has a higher order of convergence for the saturation in $L^2$-norm than that of the standard upwind scheme. For a given upwind scheme, either with or without slope limiter, quadratic CGFEM with exact the same degrees of freedom as linear CGFEM provides more accurate control volume boundary fluxes than that of linear CGFEM, especially for finer meshes, which results in a higher order of convergence on saturation in $L^2$-norm. However, the $L^2$-norm error of the saturation is dominated by the schemes applied to solve the transport equation. Hence in this situation, the advantages of using quadratic CGFEM is not optimized.  If a higher order method is provided to solve the saturation transport equation, we would expect a higher order of convergence on saturation in $L^2$-norm. Also, similarly as in the single-phase flow simulation, the initial saturation profile for Example 2-1 has a jump discontinuity which results in lower order of convergence that can be seen in Figure \ref{fig:ex2err}.

\section{Conclusion} \label{sec:conclusion}
We presented a post-processing technique for linear and quadratic CGFEM to obtain locally conservative fluxes. The post-processing technique can be naturally generalized to higher order of CGFEMs and to rectangular meshes. The CGFEM is utilized to solve the Darcy equation in the two-phase flow problem and the post-processing of the CGFEM solutions ensures the locally conservative numerical velocity, which is crucial for robustness of the solver for saturation transport equation in the sense of generating stable and positivity preserving solution.

In this paper, the saturation transport equation is solved by FVM combined with upwind schemes. We presented the widely used upwind scheme as well as developed an upwind scheme with slope limiter for triangular meshes. The analysis of the upwind scheme with slope limiter for triangular meshes is still ongoing. We showed numerically that the upwind scheme with slope limiter improves the convergence property when applied to two-phase flow model problem, especially in the case where the initial saturation profile has sufficient regularity. Moreover, the $L^2$-norm error of the saturation is dominated by the method for solving the saturation transport equation. This is a motivation for designing a method with higher convergence order for the transport equation and we will further investigate this topic in the future.


\bibliographystyle{siam}
\bibliography{tpf}

\end{document}